\newtheorem{theoreme}{Theorem}
\newtheorem{lemme}{Lemma}
\newtheorem{definition}{Definition}
\newtheorem{corollaire}{Corollary}
\newcommand{\EE}{\ensuremath{{\mathrm{\bf E}}}}
\newcommand{\A}{\ensuremath{{\mathcal{A}}}}
\newcommand{\LL}{\ensuremath{{\mathcal{L}}}}
\newcommand{\G}{\ensuremath{\mathcal{G}}}
\renewcommand{\P}{\ensuremath{{\mathcal{P}}}}
\newcommand{\I}{\ensuremath{{\mathcal{I}}}}
\newcommand{\F}{\ensuremath{{\mathcal{F}}}}
\newcommand{\R}{\ensuremath{{\mathbb{R}}}}
\newcommand{\Rd}{\ensuremath{{\mathbb{R}^d}}}
\newcommand{\Z}{\ensuremath{\mathbb{Z}}}
\newcommand{\om}{\ensuremath{\omega}}
\renewcommand{\O}{\ensuremath{\Omega}}
\newcommand{\1}{\ensuremath{\mbox{\rm 1\kern-0.23em I}}}
\renewcommand{\L}{\ensuremath{\Lambda}}
\newcommand{\Ln}{\ensuremath{{\Lambda_n}}}
\newcommand{\Lno}{\ensuremath{{\Lambda_n^\oplus}}}
\renewcommand{\l}{\ensuremath{\lambda}}
\title{\sc Variational principle for Gibbs point processes with finite range interaction}
\author[1]{David Dereudre}
\affil[1]{Laboratoire de Math\'ematiques Paul Painlev\'e\\University of Lille 1, France}
\begin{document}
\maketitle

\begin{abstract}

The variational principle for Gibbs point processes with general finite range interaction is proved. Namely, the Gibbs point processes are identified as the minimizers of the free excess energy equals to the sum of the specific entropy and the mean energy. The interaction is very general and includes superstable pairwise potential, finite or infinite multibody potential, geometrical interaction, hardcore interaction. The only restrictive assumption involves the finite range property.

  \bigskip

\noindent {\it Keywords.} specific entropy ; pairwise potential ; Strauss model ; Quermass-interaction

\end{abstract}

\section{Introduction}

Gibbs point processes are popular models to describe the repartition of points or geometrical  structures in space. 
They appeared first for modelling continuum interacting particles in statistical mechanics. Now they are widely used in as different domains as astronomy, biology, computer science, ecology, forestry,  image analysis, materials science. The main reason is that they provide a clear interpretation of the interactions between the points, such as attraction or repulsion depending on their interdistance. We refer to \cite{B-Pre76}, \cite{chiu2013},  and \cite{vanlieshout2000} for 
classical text books on Gibbs point processes, including examples and applications.

The Gibbs point processes are defined via their local unnormalized conditional densities of the form $e^{-H}$ where $H$ is an energy functional. They are the equilibrium states of the DLR equations (see definition 2). A variational principle, coming from the statistical physics, claims that the Gibbs measures are also the minimizers of the free excess energy equals to  the entropy plus the mean energy. More precisely, for any stationary probability measure $P$ on the space of configurations in $\Rd$, the specific entropy $\I(P)$ with respect to the Poisson point process  and the mean energy per unit volume $H(P)$ are defined via thermodynamic limits (see  \eqref{entropy} and  \eqref{meanenergy}). The principle claims that the Gibbs measures are exactly the probability measures $P$ which minimize the functional $P\mapsto \I(P)+H(P)$. It thus supports the common belief that the Gibbs measures provide a proper description of physical systems in thermodynamic equilibrium. They are many applications of the variational principle in physics and mathematics. In statistical mechanics, the phase transition phenomenon (non uniqueness of Gibbs measures) can be proved in studying the geometry of the set of Gibbs measures and in particular its extremal points. The variational principle is a key tool in this study (see \cite{georgii} for a general presentation). In probability theory, it is related to the large deviation principle for the empirical field \cite{Georgii94}. In spatial statistic, it is a crucial identifiability assumption for the consistency of the maximum likelihood estimator \cite{DereudreLavancier15}. This last recent paper highlights the importance of the variational principle  for models coming from the spatial statistic. It was our initial motivation for the present paper.

For the lattice Gibbs models, the variational principle is well established and a general proof can be found in \cite{B-Pre76}, Section 7. The first proof was for the Ising model in \cite{landfordRuelle}. In the setting of Gibbs point processes, they are less results. In  \cite{Georgii94, Georgii94b} the author proves the variational principle for the pairwise potential energy  $H(\omega)=\sum_{x,y} \phi(|x-y|)$ where the sum is over all couples $\{x,y\}$ in the configuration $\om$.
The potential $\phi$ is assumed to be non-integrably divergent at the origin (i.e. $\int_0 ^1 \phi(r)r^{d-1}ds =\infty$) producing a strong repulsion when the particles are closed to each other. A typical examples is the Lennard Jones pairwise potential $\phi(r)=ar^{-12}-br^{-6}$. In another work the variational principle is proved for the Delaunay-tile interaction \cite{A-DerGeo09}. The energy function has the following form $H(\omega)=\sum_{T} \phi(T)$ where the sum is over all triangles $T$ of the Delaunay triangulation based on $\omega$. It is a continuum spatial version of nearest neighbours interaction models. As fas as we know both papers are the only ones proving the variational principle for  Gibbs point processes models. Unfortunately many interesting energy functions are not covered by these results, as for example any pairwise potential energy with bounded potential $\phi$. In particular, the well-studied Strauss model in spatial statistics with the pairwise potential $\phi(r)=\1_{[0,R]}(r)$ is uncovered. In stochastic geometry, the Area-interaction or the Quermass-interaction are not covered as well (see \cite{A-Baddeley95} and \cite{A-Kendall99}).

In this paper we prove the variational principle for Gibbs point processes with general finite range interaction. The other assumptions are standard and satisfied by all models we met in statistical mechanics and spatial statistic (see Section 3). In particular, our setting includes superstable pairwise potential, finite or infinite multibody potential, geometrical interaction, hardcore interaction. The proof is based on fine controls of the relative entropy of $P_\L$ with respect to the Gibbs measure on $\L$, where $P$ is any stationary field on $\Rd$ and $\L$ an observable window tending to $\R^d$. This strategy was already present in \cite{B-Pre76}, \cite{Georgii94b} and \cite{Georgii94}.

The paper is organized as follows. In section 2, we introduce the notations and the Gibbs models. The variational principle and the main theorem are presented in Section 3. Two standard examples are given in Section 4; the  superstable pairwise potential with compact support and the Quermass interaction. Section 4 is devoted to the proof of our main theorem.

\section{The Gibbs models}

\subsection{State spaces and reference measures}

Our setting is the Euclidean space $\Rd$ of arbitrary dimension $d \ge 1$ equipped with its Borel $\sigma$-field. An element of $\Rd$ is denoted by $x$ and the Lebesgue measure on $\Rd$ is denoted by $\l^d$. A {\bf configuration} is a subset $\om$ of $\Rd$ 
which is locally finite, meaning that $\om\cap\L$ has finite cardinality
$N_\L(\om)=\#(\om\cap\L)$ for every bounded Borel set $\L$. 
The space $\O$ of all configurations is equipped with the 
$\sigma$-algebra $\mathcal{F}$ generated by the counting variables $N_{\L}$. The space of finite configurations is denoted by $\O_f$.

The symbol $\L$ will always refer to a bounded Borel set in $\R^d$.
It will often be convenient to write $\om_\L$ in place of $\om\cap\L$. 
We abbreviate $\om\cup\{x\}$ to $\om\cup x$ 
and abbreviate  $\om\backslash\{x\}$ to $\om\backslash x$
for every $\om$ and every $x$ in $\om$.  

As usual, we take the reference measure on $(\O,\mathcal{F})$ 
to be the distribution $\pi$ of the Poisson point process 
with intensity measure $\l^d$  on $\Rd$.
Recall that $\pi$ is the unique probability measure
on $(\O,\mathcal{F})$ such that the following hold for all subsets $\L$:
(i) $N_{\L}$ is Poisson distributed with parameter $\l^d(\L)$, 
and (ii) conditional on $N_\L=n$, 
the $n$ points in $\L$ are independent with uniform distribution on $\L$. 
The Poisson point process restricted to $\L$ will be denoted $\pi_\L$. 

Translation by a vector $u \in \Rd$ is denoted by $\tau_u$,
either acting on $\Rd$ or on $\O$. 
A probability $P$ on $\O$ is said stationary if  $P=P\circ\tau_u^{-1}$ for any $u$ in $\Rd$. In this paper we consider only stationary probability measures $P$ with finite finite intensity measure (i.e. $\EE_P(N_{[0,1]^d})<+\infty$). We denote by $\P$ the space of such probability measures. 


%

\subsection{Gibbs point processes models}

We consider a measurable function $H$ from $\Omega_f$ to $\R\cup\{+\infty\}$ which is called energy function. We assume that $H$ is {\bf stationary}; for any $\om\in\Omega_f$ and any $u\in\Rd$, $H(\om)=H(\tau_u(\om)$. We assume also that the energy function $H$ is {\bf hereditary} which means that for any $x$ in $\Rd$ and $\om$ in $\Omega_f$, $H(\om\cup\{x\})=+\infty$ as soon as $H(\om)=+\infty$. The energy $H$ is said {\bf non-degenerate} if $H(\{0\})\neq \infty$ and  $H(\emptyset)=0$.  We assume also that $H$ is {\bf stable} which means that there exists $A>0$ such that for any finite configuration $\om\in\O_f$

\begin{equation}\label{stable}
H(\om)\ge -A N(\om).
\end{equation}

All these assumptions are standard and non restrictive. The main restriction in the present paper is the {\bf finite range} assumption  which means that there exists $R\ge 0$ such that for any configuration $\omega$, any bounded set $\Lambda$ the quantity

\begin{equation}\label{EnergyL}
H_\L(\om):= H(\om_{\Lambda'})-H(\om_{\Lambda'\backslash \Lambda})
\end{equation}
(with the convention $\infty-\infty=0$) does not depend on the choice of $\L'$ as soon as $\L\oplus B(0,R) \subset \L'$. $H_\L(\om)$ represents the energy of $\om_\L$  inside $\L$ given 
the configuration $\om_{\L^c}$ outside $\L$. 
%
%
%

The Gibbs measures $P$ associated to $H$ are defined through their local conditional specification, as described below. We denote by $\Omega_\infty$ the set of configurations $\om\in\Omega$ such that for any $\Lambda$, $H(\omega_\L)<+\infty$. So for every $\L$ and every configuration $\om\in\Omega_\infty$, the local conditional density  $f_\L$ of $P$ with respect to $\pi_\L$ is defined by
\begin{eqnarray}
\label{localdensity}
f_\L(\om) & = & \frac{1}{Z_\L(\om_{\L^c})} e^{-H_\L (\om)},
\end{eqnarray} 
where $ Z_\L(\om_{\L^c})$ is the normalization constant given by 
$$Z_\L(\om_{\L^c})= \int e^{-H_\L(\om'_\L \cup \om_{\L^c})}\pi_\L(d\om'_\L).$$
Let us note that $ 0< Z_\L(\om_{\L^c})< +\infty$ since $H$ is stable and non-degenerate.

We are now in position to define the Gibbs measures associated to $H$ (See \cite{B-Pre76} for instance).

\begin{definition}
A probability measure $P$ on $\O$ is a {\bf Gibbs measure}
for the energy function $H$  
if $P(\Omega_\infty)=1$ and if for every bounded borel set $\L$, for any measurable and bounded function $g$ from $\O$ to $\R$,
\begin{equation}\label{DLR}
\int g(\om)P(d\om) = \int \int g(\om'_\L \cup \om_{\L^c}) f_\L(\om'_\L \cup \om_{\L^c}) \pi_\L({\rm d}\om'_\L) P(d\om).
\end{equation} 
Equivalently, 
for $P$-almost every $\om$ the conditional law of $P$ given $\om_{\L^c}$ 
is absolutely continuous with respect to $\pi_\L$ with the density $f_\L$ defined in \eqref{localdensity}.
\end{definition}

The equations (\ref{DLR}) are called the Dobrushin--Lanford--Ruelle (DLR)
equations. The existence of such Gibbs measures, in the present setting of finite range stable interactions, is done in \cite{A-DerDroGeo09}, Corollary 3.4 and Remark 3.1. Note that the uniqueness of such $P$ does not necessarily hold, leading to the phase transition phenomenon. We denote by $\G_H$ the set of all Gibbs measures for the energy $H$. 

\section{Variational Principle}

The variational principle in statistical mechanics claims that the Gibbs measures are the minimizers of the free excess energy defined by the sum of the the mean energy and the specific entropy. Moreover the minimum is equal to minus the pressure. Let us first define precisely all these macroscopic  quantities. For the sake of simplicity we consider the macroscopic limit along the sequence of sets $\L_n=[-n,n]^d$, $n\ge 1$. Limits $\L\to\Rd$ in the Van-Hove sens could have been considered as well.

Let $P$ be a stationary probability measure in $\P$. The {\bf specific entropy} of $P$ is defined as the limit

\begin{equation}\label{entropy}
 \I(P)=\lim_{n\to +\infty} \frac{1}{|\L_n|} \I(P_\Ln,\pi_\Ln),
 \end{equation}
where for any probability measures $\mu$ and $\nu$
$$ \I(\mu,\nu)= \left\{
 \begin{array}{ll}
\int \ln(f) d\mu  & \text{ if } \mu\ll\nu \text{ with density } f\\
+\infty & \text{otherwise}
\end{array}\right..$$

 Note that the limit in $\eqref{entropy}$ always exists; see \cite{georgii} for general results on specific entropy.

Let us now introduce the {\bf Pressure}. It is defined as the following limit.

\begin{equation}\label{pressure}
p_H:=\lim_{n\to +\infty} \frac{1}{|\L_n|} \ln (Z_{n}),
\end{equation}

where $Z_n=Z_{\L_n}(\emptyset)$  is the partition function with empty boundary condition.

In the following lemma we show that $p_H$ always exists in the setting of the present paper.

\begin{lemme}\label{existencepressure}
Assuming that the energy function $H$ is finite range, stable and non-degenerate, then the pressure $p_H$ defined in \eqref{pressure} exists and belongs to $[-1,(e^A-1)]$.
\end{lemme}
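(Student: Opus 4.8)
The plan is first to bound $a_n:=\frac{1}{|\Ln|}\ln Z_n$ for every fixed $n$, and then to prove that this sequence converges by an approximate super-additivity argument based on the finite range property. Recall that with empty boundary condition $Z_n=\int e^{-H(\om_{\Ln})}\,\pi_{\Ln}(d\om)$. For the lower bound I would restrict this integral to the empty configuration: since $H(\emptyset)=0$ and $\pi_{\Ln}(\{\emptyset\})=e^{-|\Ln|}$, one gets $Z_n\ge e^{-|\Ln|}$, i.e. $a_n\ge-1$. For the upper bound I would use stability \eqref{stable} in the form $e^{-H(\om_{\Ln})}\le e^{A N_{\Ln}(\om)}$, so that $Z_n\le\EE_{\pi_{\Ln}}\big[e^{A N_{\Ln}}\big]=e^{|\Ln|(e^A-1)}$ by the Laplace transform of the Poisson law of $N_{\Ln}$ with parameter $|\Ln|$; hence $a_n\le e^A-1$. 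Thus each $a_n\in[-1,e^A-1]$, and once convergence is shown the limit $p_H$ automatically lies in this interval.

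The core is convergence. The decisive consequence of finite range that I would isolate is an exact additivity of the energy over well-separated regions: if bounded sets $\Delta_1,\dots,\Delta_k$ are pairwise at distance strictly larger than $R$, then $H(\om_{\Delta_1\cup\cdots\cup\Delta_k})=\sum_{i=1}^k H(\om_{\Delta_i})$ for every finite $\om$. This follows from \eqref{EnergyL} applied with $\L=\Delta_1$ and two admissible choices of $\L'$: the choice $\L'=\Delta_1\oplus B(0,R)$, which by separation contains no point of $\om$ outside $\Delta_1$ and gives $H_{\Delta_1}(\om)=H(\om_{\Delta_1})$, and a large box containing every $\Delta_i\oplus B(0,R)$, which gives $H_{\Delta_1}(\om)=H(\om_{\Delta_1\cup\cdots\cup\Delta_k})-H(\om_{\Delta_2\cup\cdots\cup\Delta_k})$; equating the two and iterating proves the claim.

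Armed with this, I would tile $\Lambda_N$ by disjoint cubes of side $2n+2R$ and place in each a centered translate $\Delta_i$ of $\Ln$, so that the $k_N^d$ copies, with $k_N=\lfloor 2N/(2n+2R)\rfloor$, are pairwise separated by corridors of width $2R>R$. Restricting the integral defining $Z_N$ to configurations with no point in $\Lambda_N\setminus\bigcup_i\Delta_i$, then using the factorization of $\pi_{\Lambda_N}$ over disjoint regions, the additivity above, and the stationarity of $H$ and $\pi$ (which makes the factor attached to each $\Delta_i$ equal to $Z_n$), I obtain
$$Z_N\ \ge\ e^{-(|\Lambda_N|-k_N^d\,|\Ln|)}\,Z_n^{\,k_N^d}.$$
Taking logarithms, dividing by $|\Lambda_N|=(2N)^d$ and letting $N\to\infty$ with $n$ fixed yields $\liminf_N a_N\ge-(1-\rho_n)+\rho_n\,a_n$, where $\rho_n=(n/(n+R))^d$. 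Letting $n\to\infty$, using $\rho_n\to1$ and the boundedness of $(a_n)$, gives $\liminf_N a_N\ge\limsup_N a_N$; since the reverse inequality is automatic, the limit $p_H=\lim_n a_n$ exists and lies in $[-1,e^A-1]$.

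The main obstacle I anticipate is bookkeeping rather than any deep estimate. One must verify the finite range additivity carefully from \eqref{EnergyL}, taking care of the convention $\infty-\infty=0$ and of hereditarity (stability and non-degeneracy ensure that the relevant configurations carry finite, strictly positive Poisson weight), and one must check that the corridor volume $|\Lambda_N|-k_N^d|\Ln|$, after division by $|\Lambda_N|$, contributes only the harmless term $1-\rho_n$, which vanishes as $n\to\infty$. Apart from these verifications the two bounds and the super-additivity scheme are elementary.
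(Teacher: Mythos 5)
Your proof is correct and follows essentially the same route as the paper's: the same bounds $e^{-|\Ln|}\le Z_n\le e^{|\Ln|(e^A-1)}$ from non-degeneracy and stability, and the same approximate-superadditivity argument obtained by restricting the partition function of a large box to configurations that are empty outside well-separated translates of $\Ln$, then factorizing the energy via the finite range property. The only cosmetic difference is that you state the finite-range additivity lemma explicitly and use fixed corridors of width $2R$, whereas the paper uses a Euclidean-division tiling with cubes shrunk by $R_0$; both yield the same $\liminf\ge\limsup$ conclusion.
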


\begin{proof}

For any set $\L$ we denote by $\L^\ominus$ the set 
$$ \L^\ominus=\{ x\in\L, B(x,R_0)\subset \L\},$$

where $R_0$ is an integer larger than the range of the interaction $R$. So for $n>R_0$, $\L_n^\ominus=\L_{n-R_0}$.

 For any $R_0<m<n$, we consider the Euclidean division $n=km+l$ with $0\le l<m$, $k \ge 0$. Let $(\L_m^i)_{1\le i \le k^d}$ a family of $k^d$ disjoint cubes inside $\L_n$ where each cube is a translation of $\L_m$. 

From the definition of the partition function

\begin{eqnarray*}
Z_n & \ge & \pi_{\L_n}(\om_{\L_n\backslash \cup_i \L_m^{i, \ominus}}=\emptyset)\int e^{-H(\om)}\pi_{\cup_i \L_m^{i, \ominus}}(d\om)  \\
& = & e^{-(|\L_n|-k^d|\L^\ominus_m|)} \prod_i Z_{\L_m^{i,\ominus}}(\emptyset)\\
& \ge & e^{-( k^d2dR_0(2m)^{d-1}+2dm(2n)^{d-1}) } Z_{{m-R_0}}^{k^d}.
\end{eqnarray*}
So since $|\L_n|/k^d$ goes to $|\L_m|$ when $n$ goes to infinity, 

$$ \liminf_{n\to \infty} \frac{1}{|\L_n|} \ln(Z_n) \ge \frac{1}{|\L_m|} \big( Z_{m-R_0}- 2dR_0 (2m)^{d-1}  \big).$$

This inequality holds for each $m\ge R_0$. So, letting $m$ tends to infinity

$$ \liminf_{n\to \infty} \frac{1}{|\L_n|} \ln(Z_n) \ge \limsup_{m\to \infty} \frac{1}{|\L_m|}  Z_{m-R_0} = \limsup_{m\to \infty} \frac{1}{|\L_m|}  Z_{m}$$

which proves that the limit exists in $\R\cup\{\pm \infty\}$.

Thanks to the stability and the non degeneracy of $H$ we get that
$$  e^{-|\L_n|} \le Z_n \le e^{|\L_n|(e^A-1)}$$
which implies that $p_H\in[-1,(e^A-1)]$.

\end{proof}

The last macroscopic quantity involves the mean energy of a stationary probability measure $P$. It is also defined by a limit but, in opposition to the other macroscopic quantities, we have to assume that it exists. The proof of such existence is based on stationary arguments and a nice representation of the energy contribution per unit volume. Examples are given in Section \ref{Sec:Examples}. So for any stationary probability measure $P$ we assume that the following limit exists

\begin{equation}\label{meanenergy}
H(P):=\lim_{n\to \infty} \frac{1}{|\L_n|} \int H(\om_{\L_n}) P(d\om).
\end{equation}
and we call the limit {\bf mean energy} of $P$.

We need to introduce a last technical assumption on the boundary effects of $H$. We assume that for any $P$ in $\G_H$ 

\begin{equation}\label{boundary}
\lim_{n\to\infty} \frac{1}{|\L_n|} \int \partial H_{\L_n}(\om)P(d\om)=0,
\end{equation}
where $\partial H_{\L_n}(\om)= H_{\L_n}(\om)-H(\om_{\L_n})$.  This assumption is satisfied by all the examples we meet.

\begin{theoreme}\label{mainTh}

 We assume that $H$ is stationary, hereditary, non-degenerate, stable and finite range. Moreover we assume that the mean energy exists for any stationary probability measure $P$ (i.e. the limit \eqref{meanenergy} exists) and that the boundary effects assumption \eqref{boundary} holds. Then for any stationary probability measure $P\in\P$

\begin{equation}\label{inequality}
 I(P)+H(P) \ge -p_H,
\end{equation}
 
with equality if and only if $P$ is a Gibbs measure (i.e. $P\in\G_H$).

\end{theoreme}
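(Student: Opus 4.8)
The plan is to reduce the whole statement to the study of a single quantity: the relative entropy of $P_{\Ln}$ with respect to the finite-volume Gibbs measure $G_n$ on $\Ln$ with empty boundary condition, i.e. the law on $\Ln$ with $\pi_{\Ln}$-density $Z_n^{-1}e^{-H(\om_{\Ln})}$. Writing $f_n$ for the $\pi_{\Ln}$-density of $P_{\Ln}$, the chain rule gives the exact identity $\I(P_{\Ln},G_n)=\I(P_{\Ln},\pi_{\Ln})+\ln Z_n+\int H(\om_{\Ln})\,dP$. Since relative entropy is nonnegative, dividing by $|\Ln|$ and letting $n\to\infty$ — the three limits existing by \eqref{entropy}, Lemma \ref{existencepressure} and the mean energy assumption \eqref{meanenergy} — yields $\I(P)+p_H+H(P)\ge 0$, which is \eqref{inequality}. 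If $\I(P)=+\infty$ or $H(P)=+\infty$ the inequality is trivial, so one may assume both finite; note stability gives $H(P)\ge -A\,\EE_P(N_{[0,1]^d})>-\infty$. Everything then rests on identifying when the specific relative entropy $\lim_n|\Ln|^{-1}\I(P_{\Ln},G_n)$, which equals $\I(P)+H(P)+p_H$, vanishes.

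For the implication $P\in\G_H\Rightarrow$ equality, I would bound $\I(P_{\Ln},G_n)$ from above. By the DLR equations the density $f_n$ is the $P$-average over $\om_{\Ln^c}$ of the conditional densities $f_{\Ln}$, so the convexity of $x\mapsto x\ln x$ (Jensen) gives $\I(P_{\Ln},\pi_{\Ln})\le -\int H_{\Ln}\,dP-\int\ln Z_{\Ln}(\om_{\Ln^c})\,dP$. Inserting this into the identity above, the bulk energies combine into the boundary term $-\int\partial H_{\Ln}\,dP$ and the partition functions into $\ln Z_n-\int\ln Z_{\Ln}(\om_{\Ln^c})\,dP$. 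The first is $o(|\Ln|)$ by assumption \eqref{boundary}; the second is $o(|\Ln|)$ because, by finite range, $Z_{\Ln}(\om_{\Ln^c})$ depends on the boundary condition only through the outer shell of width $R_0$, so its $P$-averaged logarithm shares the thermodynamic limit $p_H$ of $\ln Z_n$ (a boundary-condition independence estimate argued from stability exactly as in Lemma \ref{existencepressure}). Hence $|\Ln|^{-1}\I(P_{\Ln},G_n)\to 0$, i.e. equality holds.

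The reverse implication, equality $\Rightarrow P\in\G_H$, is the heart of the matter and the step I expect to be hardest. The goal is to recover the DLR equation on an arbitrary fixed box $\Delta$, namely $P(\,\cdot_\Delta\mid\om_{\Delta^c})=\gamma_\Delta$, where $\gamma_\Delta$ denotes the specification of density $f_\Delta$. I would tile $\Ln$ by corridor-separated translates $(\Delta_i)$ of $\Delta$, the corridors having width $R_0$. Because $H$ is finite range and $G_n$ is itself Gibbsian on interior boxes, conditionally on the corridor configuration the tiles are independent under $G_n$, each carrying the law $\gamma_{\Delta_i}$. The chain rule together with the superadditivity of relative entropy against a product then gives $\I(P_{\Ln},G_n)\ge\sum_i E_P\big[\I(P(\cdot_{\Delta_i}\mid\om_{\mathrm{rest}}),\gamma_{\Delta_i})\big]$ with $\mathrm{rest}=\Ln\setminus\bigcup_i\Delta_i$. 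By stationarity the bulk summands all equal a common value $T\ge 0$, and there are of order $|\Ln|$ of them; dividing by $|\Ln|$ and using that the specific relative entropy is zero forces $T=0$, i.e. $P(\cdot_\Delta\mid\om_{\mathrm{rest}})=\gamma_\Delta$ almost surely.

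It then remains to upgrade this partial-boundary statement to genuine DLR conditioning on the full exterior. The tool is the additive identity $E_P[\I(P(\cdot_\Delta\mid\A),\gamma_\Delta)]=E_P[\I(P(\cdot_\Delta\mid\A),P(\cdot_\Delta\mid\B))]+E_P[\I(P(\cdot_\Delta\mid\B),\gamma_\Delta)]$, valid whenever $\B\subseteq\A$ and $\gamma_\Delta$ is $\B$-measurable, which shows that conditioning on a finer $\sigma$-algebra can only increase the averaged relative entropy to the shell-measurable target $\gamma_\Delta$. Taking $\A=\sigma(\om_{\mathrm{rest}})$ and $\B=\sigma(\om_{\Lambda_M\setminus\Delta})$ for a large cube $\Lambda_M$ with $\Lambda_M\setminus\Delta\subseteq\mathrm{rest}$, the identity turns $T=0$ into $P(\cdot_\Delta\mid\om_{\Lambda_M\setminus\Delta})=\gamma_\Delta$ for every $M$; letting $M\to\infty$ along $\sigma(\om_{\Lambda_M\setminus\Delta})\uparrow\sigma(\om_{\Delta^c})$ and invoking backward martingale convergence yields $P(\cdot_\Delta\mid\om_{\Delta^c})=\gamma_\Delta$, the DLR equation for $\Delta$. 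As $\Delta$ is arbitrary this gives $P\in\G_H$, with $P(\Oi)=1$ since the recovered conditional laws are supported on finite-energy configurations. The main difficulties I anticipate are confined to this reverse part: making the finite-range conditional factorization and the superadditivity bookkeeping precise and handling the boundary tiles in the counting — the finite range assumption being exactly what makes the decoupling, and hence the whole scheme, work.
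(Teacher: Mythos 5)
Your treatment of the inequality and of the direction ``Gibbs $\Rightarrow$ equality'' is correct and close to the paper's: the chain-rule identity plus nonnegativity of relative entropy is exactly the paper's first step, and your Jensen bound is a mild repackaging of the paper's computation \eqref{calcul} (the paper conditions on the enlarged box $\Lno$ and controls the extra shell entropy by subadditivity, where you discard it by convexity); the partition-function comparison you appeal to is precisely the paper's empty-shell estimate \eqref{inegaZ}, and only that one-sided bound is needed.

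The converse (equality $\Rightarrow$ Gibbs) contains a genuine gap. After the (correct) conditional-independence and superadditivity steps, your summands are $E_P\big[\I\big(P(\cdot_{\Delta_i}\mid\om_{\mathrm{rest}}),\gamma_{\Delta_i}\big)\big]$ with $\mathrm{rest}=\Ln\setminus\bigcup_j\Delta_j$. First, these are \emph{not} ``all equal by stationarity'': the region $\mathrm{rest}$ seen from $\Delta_i$ depends on $i$, so you must first invoke your own monotonicity identity to lower bound each summand by the one where you condition only on the translation-covariant $R_0$-shell of $\Delta_i$; that defect is repairable. The fatal point is the upgrade step. You need $\B=\sigma(\om_{\L_M\setminus\Delta})\subseteq\A=\sigma(\om_{\mathrm{rest}})$, i.e.\ $\L_M\setminus\Delta\subseteq\mathrm{rest}$; but $\mathrm{rest}$ has a hole at every other tile $\Delta_j$, so the inclusion fails as soon as $\L_M$ reaches the neighbouring tiles, i.e.\ for every $M$ beyond the tile spacing --- exactly the regime you need for $M\to\infty$. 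Monotonicity then runs the wrong way: vanishing of the averaged entropy under the coarse (shell, or holey) conditioning says nothing about the finer conditioning on $\sigma(\om_{\Delta^c})$, whose averaged entropy can only be larger. What your tiling actually proves is $P(\cdot_\Delta\mid\om_S)=\gamma_\Delta$ for a shell $S$ of width $R_0$, which is strictly weaker than the DLR equation.

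The scheme can be saved by changing the geometry: fix $M$, tile $\Ln$ by disjoint translates $\L_M+c_j$ separated by corridors of width $R_0$, and put one copy $\Delta+c_j$ at the center of each big box. The $\Delta$-copies are now mutually far apart, so under your $G_n$ (the paper's $Q_n$) they are conditionally independent given everything else; superadditivity plus monotonicity --- legitimate here because $(\L_M+c_j)\setminus(\Delta+c_j)$ \emph{is} contained in the complement of all the $\Delta$-copies --- gives $\I(P_\Ln,Q_n)\ge K_n\,E_P\big[\I\big(P(\cdot_\Delta\mid\om_{\L_M\setminus\Delta}),\gamma_\Delta\big)\big]$ with $K_n$ of order $|\Ln|/|\L_M|$. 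Letting $n\to\infty$ first (for fixed $M$) forces this averaged entropy to vanish, and only then $M\to\infty$ with backward martingale convergence yields DLR. With that correction your route is valid and genuinely different from the paper's, which instead deduces from $\I(P_\Ln,Q_n)=o(|\Ln|)$, via Lemma 7.5 of \cite{Georgii94b}, the $L^1(Q_n)$-closeness of the densities $f_{n,\L'}$ and $f_{n,\L'\setminus\L}$ and then transfers the internal DLR property of $Q_n$ to $P$; your (corrected) argument trades that density lemma for explicit entropy decoupling, at the price of the two-scale $n\to\infty$, $M\to\infty$ limit.
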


\section{Examples}\label{Sec:Examples}

In this section we present two examples of energy functions included in the setting of Theorem \ref{mainTh}. The first example is the standard superstable pairwise potential energy. The second example involves the Quermass interaction which is an energy function for morphological patterns built by unions of random convex sets. It can be also viewed as a infinite body potential interaction. The main restriction in Theorem \ref{mainTh} is the finite range property and so any standard examples, having this property, could have been considered as well.

\subsection{Pairwise potential}

In this section the energy function has the following expression: for any finite configuration $\om\in\Omega_f$

\begin{equation}\label{pairwise}
H(\omega)=zN(\omega)+\sum_{\{x,y\}\subset \omega} \phi(x-y),
\end{equation}

where $\phi$ is a symmetric function from $\Rd$ to $\R\cup\{+\infty\}$ with compact support. The parameter $z>0$ is called activity and allow to change the intensity of the reference Poisson point process. The potential $\phi$ is said stable if the associated energy $H$ in \eqref{pairwise} is stable. In the following we need that the potential $\phi$ is {\bf superstable} which means that $\phi$ is the sum of stable potential and a positive potential which is non negative around the origin. See \cite{Ruelle} for examples of stable and superstable pairwise potentials. In this setting the variational principle holds as a corollary of Theorem \ref{mainTh}.

\begin{corollaire}
Let $H$ be a energy function coming from a superstable pairwise potential $\phi$ given by \eqref{pairwise}. Then for any stationary probability measure $P\in\P$ 

\begin{equation}
 I(P)+H(P) \ge -p_H,
\end{equation}
 
with equality if and only if $P$ is a Gibbs measure (i.e. $P\in\G_H$). The expression of $H(P)$ is given in \eqref{MeanEPW}.
\end{corollaire}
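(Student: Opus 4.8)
The plan is to verify that the energy function $H$ defined in \eqref{pairwise} satisfies all the hypotheses of Theorem \ref{mainTh} and then to invoke that theorem directly. The structural assumptions are immediate: $H$ is stationary because $\phi(x-y)$ depends only on $x-y$ and $N$ is translation invariant; it is hereditary because adding a point only appends terms to the finite pair-sum, so an infinite pair term present in $\om$ persists in $\om\cup x$; it is non-degenerate since $H(\emptyset)=0$ and $H(\{0\})=z<\infty$; and it is finite range with range equal to the radius of the compact support of $\phi$, since $H_\L(\om)$ involves only pairs at distance at most $R$. Stability is exactly the hypothesis that $\phi$ be stable (which superstability entails), the activity term $zN$ with $z>0$ only improving the bound.

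Next I would establish the existence of the mean energy \eqref{meanenergy} and the formula \eqref{MeanEPW}. Writing $H(\om_{\Ln})=zN_{\Ln}(\om)+\sum_{\{x,y\}\subset\om_{\Ln}}\phi(x-y)$, the activity term contributes exactly $z\rho$ after normalization, where $\rho=\EE_P(N_{[0,1]^d})$ is the intensity of $P$. For the pair term I would apply the refined Campbell theorem, rewriting $\frac{1}{|\Ln|}\EE_P[\sum_{\{x,y\}\subset\om_{\Ln}}\phi(x-y)]$ in terms of the reduced Palm distribution $P^0$ of $P$. Since $\phi$ has compact support, the contribution of points $x$ lying within distance $R$ of $\partial\Ln$ is of order $|\partial\Ln|=o(|\Ln|)$, so the normalized expectation converges to $\tfrac{\rho}{2}\,\EE^0_P\big[\sum_{y\in\om\setminus 0}\phi(y)\big]$, which is the announced expression \eqref{MeanEPW}. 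The one delicate point is that this Palm expectation be well defined in $(-\infty,+\infty]$: splitting $\phi=\phi^+-\phi^-$, stability forces $\phi$ to be bounded below, so $\phi^-$ is bounded with compact support, while superstability supplies a quadratic lower bound dominating $\phi^-$. Hence when the second factorial moment density of $P$ is infinite, the positive part diverges fast enough that $H(P)=+\infty$, and otherwise both parts are finite and the limit exists. This is precisely where superstability, rather than mere stability, enters at the level of arbitrary $P\in\P$.

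Finally I would check the boundary assumption \eqref{boundary}, required only for $P\in\G_H$. Here $\partial H_{\Ln}(\om)=H_{\Ln}(\om)-H(\om_{\Ln})$ reduces to the sum of $\phi(x-y)$ over pairs with $x\in\om\cap\Ln$, $y\in\om\setminus\Ln$ and $|x-y|\le R$, all of which lie within distance $R$ of $\partial\Ln$. I expect this to be the main obstacle: to bound its expectation I would invoke Ruelle's superstability estimates (see \cite{Ruelle}), which guarantee that the correlation functions of any Gibbs measure associated to a superstable, lower regular potential are uniformly bounded. Boundedness of the two-point correlation function then controls the expected number of interacting boundary pairs by $C|\partial\Ln|$, so that $\frac{1}{|\Ln|}\int\partial H_{\Ln}\,dP$ is of order $|\partial\Ln|/|\Ln|\to 0$. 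This step is exactly why superstability, and not only stability, is assumed, the Ruelle bounds failing for merely stable potentials. With \eqref{meanenergy} and \eqref{boundary} verified alongside the structural assumptions, Theorem \ref{mainTh} applies and yields the stated inequality together with the characterization of equality by membership in $\G_H$.
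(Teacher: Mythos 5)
Your overall strategy---reducing the corollary to a verification of the hypotheses of Theorem \ref{mainTh}---is exactly the paper's, and two of your three steps are sound: the structural assumptions are indeed immediate, and for the mean energy the paper simply cites \cite{Georgii94}, Theorem 1, whose content is essentially the Campbell/Palm computation plus the superstability dichotomy on $E_P(N^2_{[0,1]^d})$ that you sketch. The genuine gap is in your verification of the boundary assumption \eqref{boundary}. You control $\int \partial H_{\Ln}\,dP$ by a uniform bound on the two-point correlation function times the expected \emph{number} of interacting pairs near $\partial\Ln$. That step silently assumes $\phi$ is bounded: $\partial H_{\Ln}$ is a sum of $\phi(x-y)$ over pairs straddling $\partial\Ln$, and a bound on how many such pairs there are says nothing about the sum when $\phi$ is unbounded. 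Even the more careful estimate $\int_A\int |\phi(x-y)|\,\rho_2(x,y)\,dy\,dx \le \xi^2\,|A|\int |\phi(u)|\,du$ fails whenever $\phi\notin L^1$, for instance $\phi(x)=|x|^{-2d}\1_{[0,1]}(|x|)$, or a hard core with a diverging soft part. Such potentials are allowed by \eqref{pairwise} (values in $\R\cup\{+\infty\}$, no integrability assumed) and are precisely the Lennard-Jones-type examples for which superstability was introduced, so the gap is not cosmetic.

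The paper closes this step by never separating $|\phi|$ from its Boltzmann weight. For $P\in\G_H$ it applies the GNZ equation to $\sum_{x\in\om_{\Lno\setminus\Ln}}\sum_{y}|\phi(x-y)|$, which turns the expectation into an integral of $e^{-z-\sum_{y}\phi(x-y)}\sum_{y}|\phi(x-y)|\,dx$; since stability forces $\phi$ to be bounded below, the function $c\mapsto |c|e^{-c}$ is bounded on the relevant half-line, and after using stationarity one is left with $|\Lno\setminus\Ln|$ times an exponential moment of the form $\int N_{B(0,R_0)}\,e^{(A+2z)N_{B(0,R_0)}}\,dP$. Finiteness of that moment is exactly what Ruelle's superstability estimates give (\cite{Ruelle70}, Corollary 2.9): superstability enters through Gaussian-type tail bounds on local particle numbers, not merely through the uniform bound $\rho_2\le\xi^2$ that you invoke. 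To repair your argument you would need either this GNZ computation or a refined correlation inequality of the form $\rho_2(x,y)\le \xi^2 e^{-\phi(x-y)}$, which tames $|\phi|$ by its own Boltzmann factor; the uniform correlation bound alone cannot close the step.
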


\begin{proof}

Let us check the assumptions of Theorem \ref{mainTh}. It is obvious that $H$ is stationary, hereditary, non degenerate, stable and finite range. The existence of the mean energy is proved in \cite{Georgii94}, Theorem 1. It is given by

\begin{equation}\label{MeanEPW}
 H(P)=\left\{ \begin{array}{ll}
\frac 12\int \sum_{0\neq x\in\om} \phi(x)P^0(d\om) & \text{ if  } E_P(N^2_{[0,1]^d})<\infty \\
+\infty & \text{ otherwise}
\end{array}\right.
\end{equation}

where $P^0$ is the Palm measure of $P$. Recall that $P^0$ can be viewed as the natural version of the conditional probability $P(.|0\in\om)$ (see \cite{MKM78} for more details). So, it remains to prove the boundary assumption \eqref{boundary}. Let $P$ a Gibbs measure in $\G_H$. A simple computation gives that for any $\om\in\Omega$

$$ \partial H_{\Ln(\om)}= \sum_{x\in\om_{\Lno\backslash \Ln}}
\sum_{ y\in\om_{\Ln\backslash \L_n^\ominus}} \phi(x-y),$$

where $\Lno=\L_{n+R_0}$ and $\L_n^\ominus=\L_{n-R_0}$ with $R_0$ an integer larger than the range of the interaction $R$. Using the GNZ equation (see \cite{A-NguZes79b}), the stationarity of $P$ we obtain

\begin{eqnarray*}
|E_P(\partial H_{\Ln})| & \le & \int\sum_{x\in\om_{\Lno\backslash \Ln}} \sum_{y\in\om\backslash x} |\phi(x-y)|P(d\om)\\
 & = & \int\int_{\Lno\backslash \Ln} e^{-z-\sum_{y\in\om} \phi(x-y)}  \sum_{y\in\om} |\phi(x-y)|dx P(d\om)\\
 & = &|\Lno\backslash \Ln|e^{-z} \int e^{-\sum_{y\in\om_{B(0,R_0)}} \phi(y)}  \sum_{y\in\om_{B(0,R_0)}} |\phi(y)| P(d\om).\\
\end{eqnarray*}

Since $\phi$ is stable we deduce that $\phi\ge -A-2z$. So denoting by $C:=\sup_{c\in[-A-2z;+\infty)} |c|e^{-c}<\infty$ we find that

\begin{eqnarray}\label{bord}
|E_P(\partial H_{\Ln})| 
 & \le & |\Lno\backslash \Ln|Ce^{-z} \int N_{B(0,R_0)}(\om)e^{(A+2z)N_{B(0,R_0)}(\om)} P(d\om).
\end{eqnarray}
Using the estimates in \cite{Ruelle70} corollary 2.9, the integral in the right term of \eqref{bord} is finite. The boundary assumption \eqref{boundary} follows.

\end{proof}

\subsection{Quermass interaction}

The Quermass process is a morphological interacting model introduced in \cite{A-Kendall99} which is a generalization of the well-known Widom-Rowlinson process or Area Process (see \cite{Widom70}, \cite{A-Baddeley95}). Since the existence of the Quermass process is only proved in the case $d\le 2$  we restrict the following to the non trivial case $d=2$. For any finite configuration $\om$, $L(\om)$ denotes the set $\cup_{x\in\omega} B(x,r)$ and the energy is defined as a linear combination of the Minkowski functionals;
\begin{equation}\label{Quermassinteraction}
 H(\om)= \theta_1 \A \Big(L(\om)\Big) + \theta_2 \LL \Big(L(\om)\Big) +\theta_3 \chi \Big(L(\om)\Big),
 \end{equation}
where $r>0$, $\theta_i\in\R$ ,  $i=1\ldots 3$ are parameters and $\A$, $\LL$, $\chi$ are respectively the area, the perimeter and  the Euler-Poincar\'e characteristic functionals. Recall that $\chi(L(\om))$ is equal to $ N_{cc}(L(\om))-N_h(L(\om))$ where $N_{cc}(L(\om))$ denotes the number of connected components in $L(\om)$ and $N_h(L(\om))$ the number of holes. We refer to \cite{chiu2013} for more details about Minkowski functionals.

\begin{corollaire} Let $H$ be the Quermass interaction given in \eqref{Quermassinteraction}. Then for any stationary probability measure $P\in\P$ 

\begin{equation}
 I(P)+H(P) \ge -p_H,
\end{equation}
 
with equality if and only if $P$ is a Gibbs measure (i.e. $P\in\G_H$). The expression of $H(P)$ is given in \eqref{MeanEQ}. 
\end{corollaire}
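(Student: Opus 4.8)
The strategy is to verify the five structural hypotheses of Theorem~\ref{mainTh} for the Quermass energy \eqref{Quermassinteraction} and then to produce the explicit form of $H(P)$ together with the boundary assumption \eqref{boundary}. Stationarity, heredity, non-degeneracy and the finite range property with $R=2r$ are immediate: adding a point $x$ to $\om$ can only enlarge $L(\om)$, and the increment $H(\om\cup x)-H(\om)$ depends only on the balls meeting $B(x,r)$, i.e.\ on points within distance $2r$ of $x$. Stability in dimension $d=2$ is the one genuinely nontrivial structural point, since the coefficients $\theta_i$ may have either sign; the standard route is to bound each Minkowski functional of $L(\om)$ linearly in $N(\om)$, using in particular that the Euler--Poincar\'e characteristic $\chi$ of a planar union of discs is controlled by the number of discs (by integral-geometric / Steiner-type estimates), so that $|H(\om)|\le (\,|\theta_1|\pi r^2+|\theta_2|2\pi r+|\theta_3|c_0\,)N(\om)$ for an absolute constant $c_0$. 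This yields both \eqref{stable} and, since $H(\emptyset)=0$ and $H(\{0\})=\theta_1\pi r^2+\theta_2 2\pi r+\theta_3<\infty$, non-degeneracy.

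\textbf{Existence of the mean energy.} The plan is to exhibit a representation of $H$ as a spatial integral of a stationary density so that the ergodic/stationary averaging giving \eqref{meanenergy} applies. I would write each Minkowski functional of $L(\om)$ as an additive-in-space quantity: the area is $\A(L(\om))=\int_{\R^2}\1_{x\in L(\om)}\,dx$, and the perimeter and the characteristic admit analogous local densities through the local structure of the boundary $\partial L(\om)$ and through the inclusion--exclusion expansion of $\chi$ over the discs. Restricting these integrals to $\L_n$ and dividing by $|\L_n|$, the finite range property confines boundary discrepancies to a shell of width $O(r)$ around $\partial\L_n$, whose relative contribution vanishes. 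Stationarity of $P$ then identifies the limit with the expectation under the Palm measure $P^0$ of the energy density carried by the ball at the origin, giving the explicit formula \eqref{MeanEQ}; a second-moment (or exponential-moment) condition analogous to $E_P(N^2_{[0,1]^d})<\infty$ guarantees integrability, and otherwise $H(P)=+\infty$.

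\textbf{Boundary assumption.} Exactly as in the pairwise case, I would bound $\partial H_{\L_n}(\om)=H_{\L_n}(\om)-H(\om_{\L_n})$ by the interaction between the annular points in $\om_{\L_n^\oplus\setminus\L_n}$ and those in $\om_{\L_n\setminus\L_n^\ominus}$, which is supported on a shell of volume $O(|\L_n^\oplus\setminus\L_n|)=O(n^{d-1})$. Applying the GNZ (Georgii--Nguyen--Zessin) equation for the Gibbs measure $P\in\G_H$ turns $E_P(\partial H_{\L_n})$ into an integral over $x\in\L_n^\oplus\setminus\L_n$ of the local energy increment weighted by $e^{-H_{\{x\}}(\om)}$; the finite range property makes this increment a function of $\om_{B(x,2r)}$ only, and stability gives a uniform exponential bound. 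The term is therefore $O(n^{d-1})$, so after division by $|\L_n|=O(n^d)$ it tends to $0$, establishing \eqref{boundary}.

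\textbf{Main obstacle.} The delicate step is the treatment of the Euler--Poincar\'e characteristic $\chi$. Unlike area and perimeter, $\chi$ is not monotone in $\om$ and its local density is not manifestly additive, because holes are a global (topological) feature of the union $L(\om)$. Controlling $\chi$ linearly in $N(\om)$ for stability, and more importantly exhibiting a genuinely \emph{local} integral representation of $\chi(L(\om))$ that is compatible with the stationary averaging of \eqref{meanenergy}, is where the real work lies; this is precisely why the argument is confined to $d=2$, where planar integral geometry and the Euler relation for arrangements of discs supply the needed local formula.
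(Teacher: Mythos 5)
Your plan correctly identifies the Euler--Poincar\'e characteristic as the crux, but it then defers exactly that step to unspecified ``planar integral geometry,'' and this is a genuine gap: the paper's proof \emph{consists} of carrying out that step explicitly. Using the additivity (valuation property) of the Minkowski functionals, the paper decomposes $H(\om_{\Ln})$ over the unit cubes $C_k=\tau_k([0,1]^2)$, with explicit corrections for the double counting along shared faces: each cube contributes
$\theta_1 \A\big(L(\om)\cap C_k\big)+\theta_2\big[\LL\big(L(\om)\cap C_k\big)-\LL\big(L(\om)\cap \partial C_k\big)\big]+\theta_3\big[\chi\big(L(\om)\cap C_k\big)-N_{cc}\big(L(\om)\cap \hat C_k\big)\big]$,
where $\hat C_k$ is the lower-left double edge of $C_k$, plus a remainder $R_n(\om_\Ln)$ bounded by $cN(\om_{\Ln\backslash\L_{n-R_0}})$ (equations \eqref{decomposition} and \eqref{boun}). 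Once this identity is written, stationarity of $P$ gives \eqref{meanenergy} at once, and the limit \eqref{MeanEQ} is an expectation \emph{under $P$ itself} of the cube-localized functional --- not a Palm expectation as you assert. Moreover each of the four terms is bounded by a constant times $1+N_{C\oplus B(0,r)}(\om)$ (using \eqref{boundchi} for the $\chi$ term), so $H(P)$ is finite for every $P\in\P$: your second-moment dichotomy (``$H(P)=+\infty$ otherwise'') is imported from the pairwise case, where the energy is quadratic in the local point count; here it is linear in it, and no such condition appears in \eqref{MeanEQ}.

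The second gap is your treatment of the boundary assumption \eqref{boundary} by the GNZ equation. In the pairwise corollary that argument only closes because Ruelle's superstability estimates (\cite{Ruelle70}) furnish the exponential moment $\int N_{B(0,R_0)}e^{cN_{B(0,R_0)}}\,dP<\infty$ for the Gibbs measure $P$; no analogue of those estimates is available for Quermass Gibbs measures, and the GNZ integrand is genuinely of order $Ne^{cN}$ there, since adding one disc can change $\chi$ by roughly the number of discs it meets, so $H_{\{x\}}(\om)$ is only bounded by $c(1+N_{B(x,2r)}(\om))$ from below. Your claim that ``stability gives a uniform exponential bound'' therefore does not produce an integrable majorant. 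The paper sidesteps GNZ entirely: by the same cube decomposition, $\partial H_{\Ln}(\om)=H_{\Ln}(\om)-H(\om_{\Ln})$ is supported on a shell of width $O(R_0)$ around $\partial\Ln$ and is bounded \emph{linearly} by $cN\big(\om_{\L_{n+R_0}\backslash\L_{n-R_0}}\big)$, so its expectation is $O(n^{d-1})$ for any finite-intensity stationary measure and \eqref{boundary} follows after dividing by $|\Ln|$, with no moment assumptions beyond $P\in\P$.
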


\begin{proof}

As in the previous section, we check the assumptions of Theorem \ref{mainTh}. It is obvious that $H$ is stationary, hereditary and non degenerate. In dimension $d=2$ the functional $\chi$ satisfied the following bound

\begin{equation}\label{boundchi}
|\chi(L(\om))| \le 3N(\om),
\end{equation}
see \cite{A-Kendall99}. The stability of $H$ follows easily. The finite range assumption is a consequence of the additivity of Minkowski functionals. Note that the range of the interaction is $R=2r$. In the following we denote by $C$ the cube $[0,1]^2$, by $\partial C$ the boundary of $C$ and by $\hat C$ the double edges $\{0\}\times [0,1] \cup [0,1]\times \{0\}$. For any  $k\in\Z^2$ we consider also the translations $C_k=\tau_k (C)$, $\partial C_k=\tau_k (\partial C)$ and $\hat C_k=\tau_k(\hat C)$. Thanks to the additivity of Minkowski functionals we obtain that for any finite configuration $\omega$ and any $n\ge 1$ 

 \begin{eqnarray}\label{decomposition}
H(\omega_\Ln) &=& \sum_{k\in\{-n, n-1\}^2} \bigg(\theta_1 \A\big(L(\om)\cap C_k\big) +  \theta_2 \Big[\LL\big(L(\om)\cap C_k\big)- \LL\big(L(\om)\cap \partial C_k\big)\Big] \nonumber  \\
& & \qquad \qquad +\theta_3 \Big[ \chi\big(L(\om)\cap C_k\big) - N_{cc}\big(L(\om)\cap \hat C_k\big)\Big]\bigg) + R_n(\om_{\Ln}),
\end{eqnarray}
which gives the energy contribution of each cube $C_k$ in  $H(\omega_\Ln)$. Thanks to \eqref{boundchi} and obvious bounds for $\A$ and $\LL$, the boundary term $R_n(\om_{\Ln})$ satisfies for some constant $c>0$

\begin{equation}\label{boun}
 | R_n(\om_{\Ln})| \le c N\left(\om_{\Ln\backslash \L_{n-R_0}}\right).
 \end{equation}

For any stationary probability measure $P\in\P$ we deduce easily from \eqref{decomposition} and  \eqref{boun}  the existence of the mean energy \eqref{meanenergy} with
\begin{equation}\label{MeanEQ}
 H(P)= \int \theta_1 \A\big(L(\om)\cap C\big) +  \theta_2 \Big[\LL\big(L(\om)\cap C\big)- \LL\big(L(\om)\cap \partial C\big)\Big] +\theta_3 \Big[ \chi\big(L(\om)\cap C\big) - N_{cc}\big(L(\om)\cap \hat C\big)\Big]P(d\om).
 \end{equation}

Thanks to \eqref{decomposition} and \eqref{boun}, the boundary assumption \eqref{boundary} is satisfied as well.

\end{proof}

\section{ Proof of Theorem \ref{mainTh}}

Let us start by proving the inequality \eqref{inequality} for any stationary probability measure $P\in\P$. For $n\ge1$ we define the Gibbs measure on $\L_n$ with free boundary condition by 

\begin{equation}\label{FBGibbs}
Q_n(d\om_{\L_n})=\frac{1}{Z_n}e^{-H(\om_{\Ln})}\pi_{\L_n}(d\om_{\L_n}).
\end{equation}

So

\begin{eqnarray*}
\I(P_\Ln,Q_n) & = &\int \ln\left( \frac{dP_\Ln}{dQ_n}(\om_\Ln)\right)dP_\Ln(\om_\Ln)\nonumber\\
& = &\int \ln\left( \frac{dP_\Ln}{d\pi_\Ln}(\om_\Ln) \frac{d\pi_\Ln}{dQ_n}(\om_\Ln)\right)dP_\Ln(\om_\Ln)\nonumber\\
& = &\I(P_\Ln,\pi_\Ln)+\int H(\om_\Ln)dP(\om)+\ln(Z_n),
\end{eqnarray*}

which implies that

\begin{equation}\label{representation}
\lim_{n\to\infty} \frac{1}{|\Ln|}\I_{\Ln}(P_\Ln,Q_n)=\I(P)+H(P)+p_H.
\end{equation}

Since $\I_{\Ln}(P_\Ln,Q_n)$ is positive the inequality \eqref{inequality} follows. Let us now prove that for any $P\in\G_H$ the equality holds in \eqref{inequality}. Let us show that the limit in \eqref{representation} is negative. Recall that $R_0$ is an integer larger than the range of the interaction $R$ and that $\L_n^{\oplus}$ stands for the set $\L_{n+R_0}$. We denote by $\pi_\Ln\otimes P_{\Lno\backslash \Ln}$  the law of the point process on $\Lno$ with independent configurations on $\Ln$ and $\Lno\backslash \Ln$ with distributions $\pi_\Ln$ and $P_{\Lno\backslash \Ln}$ respectively. Then

\begin{eqnarray}\label{calcul}
\lim_{n\to\infty} \frac{1}{|\Ln|}\I(P_\Ln,Q_n) & = &\lim_{n\to\infty} \frac{1}{|\Ln|}\I(P_\Lno,Q_{n+R_0})\nonumber\\  
 & = & \lim_{n\to\infty}  \frac{1}{|\Ln|}\int \ln\bigg( \frac{dP_{\Lno}}{d\pi_{\Ln}\otimes P_{\Lno\backslash \Ln}}      (\om_{\Lno}) \frac{{d\pi_{\Ln}\otimes P_{\Lno\backslash \Ln}}}{ d\pi_\Lno  }(\om_{\Lno}) \nonumber\\
 & & \qquad  \qquad \qquad \frac{d\pi_\Lno} {dQ_{n+R_0}}(\om_{\Lno}) \bigg)dP_{\Lno}(\om_{\Lno})\nonumber\\
 & = & \lim_{n\to\infty}  \frac{1}{|\Ln|}  \bigg( \I(P_{\Lno\backslash\Ln},\pi_{\Lno\backslash\Ln})  +\ln(Z_{n+R_0})+\int H(\om_{\Lno})-H_{\Lno}(\om_{\Lno})  \nonumber \\
 & &  \qquad  \qquad  -\ln(Z_\Ln(\om_\Lno)) dP_{\Lno}(\om_{\Lno}) \bigg),
 \end{eqnarray}
where the densities which appear above are given by \eqref{DLR} and \eqref{FBGibbs}. By subadditivity of the  entropy ( Proposition 15.10 in \cite{georgii}), 

$$ 0\le  \I(P_{\Lno\backslash\Ln},\pi_{\Lno\backslash\Ln}) \le \I(P_{\Lno\backslash\Ln},\pi_{\Lno\backslash\Ln})-\I(P_{\Ln},\pi_{\Ln}),$$

which implies that 

$$ \lim_{n\to \infty} \frac{1}{|\Ln|}   \I(P_{\Lno\backslash\Ln},\pi_{\Lno\backslash\Ln})=0.$$

Moreover, thanks to the existence of the mean energy and the boundary assumption assumption \eqref{boundary}, the term $\lim_{n\to\infty} |\Ln|^{-1}  \int (H(\om_{\Lno})-H_{\Lno}(\om)dP(\omega)$ vanishes as well. Therefore the limit in $\eqref{representation}$ is negative provided we show

\begin{equation}\label{inegaZ}
\liminf_{n\to \infty} \frac{1}{|\Ln|}  \int \ln\left(\frac{Z_\Ln(\om)}{Z_{n+R_0}}\right) dP(\om)\ge 0.
\end{equation}
 
From the definition of $Z_\Ln(\om)$,

$$
Z_\Ln(\om)  \ge  \int \1_{\big\{\om'_{\Ln\backslash \L_{n-R_0}}=\emptyset\big\}} e^{-H(\om'_\Ln)}\pi_\Ln(d\om_\Ln')\\
 = e^{-|\Ln\backslash \L_{n-R_0}|} Z_{n-R_0}$$

and therefore

$$ \liminf_{n\to \infty} \frac{1}{|\Ln|}  \int \ln\left(\frac{Z_\Ln(\om)}{Z_{n+R_0}}\right) dP(\om)\ge  \liminf_{n\to \infty}\frac{1}{|\Ln|} \left( \ln(Z_{n-R_0})-\ln(Z_{n+R_0})-|\Ln\backslash \L_{n-R_0}|\right)=0$$
which proves \eqref{inegaZ}. The proof of Theorem \ref{mainTh} is complete if we show that any stationary probability measure $P$ solving the equality in $\eqref{inequality}$ is a Gibbs measure. We follow essentially the scheme of \cite{B-Pre76} (In the variant used in \cite{Georgii94b}, Section 7). So let $P$ be a stationary probability measure such that $\I(P)+H(P)+p_H=0$.
Let us show that for any bounded local function $g$ and any bounded set $\Lambda$,  $\int g(\om)P(d\om)=\int g_\L(\om)P(\d\om)$ where the function $g_\L$ is defined by

$$ g_\L(\om)=\int g(\om'_\L \cup \om_{\L^c}) f_\L(\om'_\L \cup \om_{\L^c}) \pi_\L({\rm d}\om'_\L).$$

Without loss of generality we assume in the following that $|g |$ is bounded by one. Thanks to the equality \eqref{representation}, for $n$ large enough $\I(P_\Ln,Q_n)$ is finite and therefore $P_\Ln$ admits a density with respect to $Q_n$ which we denote by $f_n$. Let $\Lambda'$ be a bounded set such that $\Lambda^\oplus\subset \L'$ and such that $g$ is $\F_{\L'}$-measurable. For $n$ large enough such that $\Lambda' \subset \L_n$, the probability measure $P_{\Lambda'}$ admits a density with respect to $Q_n$ restricted to $\Lambda'$ which we denote by $f_{n,\Lambda'}$. Since $\I(P_\Ln,Q_n)/|\Ln|\to 0$, using the standard Lemma 7.5 in  \cite{Georgii94b}, for any $\delta>0$ there exists $n$ large enough and a set $\Lambda'$ with $\L^\oplus\subset \L'\subset \L_n$ such that

$$\int |f_{n,\Lambda'}-f_{n,\Lambda'\backslash \L}|dQ_n<\delta.$$

We obtain that

$$ \int g(\om)-g_\L(\om) P(d\om)= \int f_{n,\Lambda'}(\om)g(\om)-f_{n,\Lambda'\backslash \L}(\om)g_\L(\om) Q_n(d\om).$$

From the definition of $Q_n$ and since $\L^\oplus\subset \L_n$ we have

$$ \int f_{n,\Lambda'\backslash \L}(\om)g_\L(\om) Q_n(d\om)=\int f_{n,\Lambda'\backslash \L}(\om)g(\om) Q_n(d\om)$$
and we deduce that $| \int g(\om)-g_\L(\om) P(d\om)|\le \delta$. Letting $\delta$ tends to zero we get the DLR equation on $\L$. The proof of Theorem \ref{mainTh} is complete.\\

{\it Acknowledgement:} This work was supported in part by the Labex CEMPI  (ANR-11-LABX-0007-01)

\bibliography{bibaph}

\bibliographystyle{chicago}

\end{document}